\title{Logic and Categories As  Tools For Building Theories}
\author{Samson Abramsky\\
Oxford University Computing Laboratory}
\date{}
\newtheorem{theorem}{Theorem}
\newtheorem{proposition}[theorem]{Proposition}
\newcommand{\beqa}{\begin{eqnarray*}}
\newcommand{\eeqa}{\end{eqnarray*}\par\noindent}
\newcommand{\vsa}{\vspace{.1in}}
\newcommand{\YY}{\mathbf{Y}}
\newcommand{\PP}{\mathcal{P}}
\newcommand{\lrarr}{\longrightarrow}
\newcommand{\rarr}{\rightarrow}
\newcommand{\ie}{\textit{i.e.}~}
\newcommand{\CC}{\mathcal{C}}
\newcommand{\DD}{\mathcal{D}}
\newcommand{\EE}{\mathcal{E}}
\newcommand{\Rel}{\mathbf{Rel}}
\newcommand{\Set}{\mathbf{Set}}
\newcommand{\Grp}{\mathbf{Grp}}
\newcommand{\Top}{\mathbf{Top}}
\newcommand{\Mat}{\mathbf{Mat}}
\newcommand{\Term}{\mathbf{1}}
\newcommand{\id}{\mathsf{id}}
\newcommand{\Id}{\mathsf{Id}}
\newcommand{\dom}{\mathsf{dom}}
\newcommand{\cod}{\mathsf{cod}}
\newcommand{\implies}{\; \Rightarrow \;}
\newcommand{\Cat}{\mathbf{Cat}}
\newcommand{\Nat}{\mathbb{N}}
\newcommand{\ZZ}{\mathbb{Z}}
\newcommand{\RR}{\mathbb{R}}
\newcommand{\Pow}{\mathcal{P}}
\newcommand{\pow}[1]{\mathcal{P}(#1)}
\newcommand{\IFF}{\;\; \Longleftrightarrow \;\;}
\newcommand{\lsem}{\llbracket}
\newcommand{\rsem}{\rrbracket}
\newcommand{\modX}{\models_{X}}
\newcommand{\somepi}{\exists(\pi)}
\newcommand{\allpi}{\forall(\pi)}
\begin{document}

\maketitle

\section{Introduction}

My aim in this short article is to provide an impression of some of the ideas emerging at the interface of logic and computer science, in a form which I hope will be accessible to philosophers.

Why is this even a good idea? Because there has been a huge interaction of logic and computer science over the past half-century which has not only played an important r\^ole in shaping Computer Science, but has also greatly broadened the scope and enriched the content of logic itself.

This huge effect of Computer Science on Logic over the past five decades has several aspects:
new ways of \emph{using} logic, new attitudes to logic, new questions and methods.
These lead to new perspectives on the question:
\begin{center}
What logic is --- and should be!
\end{center}

\noindent Our main concern is with method and attitude rather than matter; nevertheless, we shall base the general points we wish to make on a case study: \emph{Category theory}.
Many other examples could have been used to illustrate our theme, but this will serve to illustrate some of the points we wish to make.

\section{Category Theory}

Category theory is a vast subject. It has enormous potential for any serious version of `formal philosophy' --- and yet this has hardly been realized.

We shall begin with introduction to some basic elements of category theory, focussing on the fascinating conceptual issues which arise even at the most elementary level of the subject, and then discuss some its consequences and philosophical ramifications.

\subsection{Some Basic Notions of Category Theory}
We briefly recall the basic definitions. A category has a collections of \emph{objects} $A$, $B$, $C$, \ldots, and a collection of \emph{arrows} (or \emph{morphisms}) $f$, $g$ $h$ \ldots . Each arrow has specified objects as its \emph{domain} and \emph{codomain}.\footnote{More formally, there are operations $\dom$, $\cod$ from arrows to objects.} We write $f : A \rarr B$ for an arrow with domain $A$ and codomain $B$. For any triple of objects $A, B, C$ there is an operation of \emph{composition}: given $f : A \rarr B$ and $g : B \rarr C$, we can form $g \circ f : A \rarr C$. Note that the codomain of $f$ has to match with the domain of $g$. Moreover, for each object $A$, there is an \emph{identity arrow} $\id_A : A \rarr A$. These data are subject to the following axioms:
\[ h \circ (g \circ f) = (h \circ g) \circ f  \qquad f \circ \id_{A} = f = \id_{B} \circ f \]
whenever the indicated compositions make sense, \ie the domains and codomains match appropriately.

These definitions appear at first sight fairly innocuous: some kind of algebraic structure, reminiscent of monoids (groups without inverses), but with the clumsy-looking apparatus of objects, domains and codomains restricting the possibilities for composition of arrows. These first appearances are deceptive, as we shall see, although in a few pages we can only convey a glimpse of the richness of the notions which arise as the theory unfolds.

Let us now see some first examples of  categories.
\begin{itemize}
\item The most basic example of a category is $\Set$: the objects are sets, and the arrows are functions. Composition and identities have their usual meaning for functions.
\item {Any kind of mathematical structure, together with structure preserving functions, forms a category. E.g.}
\begin{itemize}
\item {\textbf{Mon} (monoids and monoid homomorphisms)}
\item {\textbf{Grp} (groups and group homomorphisms)}
\item {$\mbox{\textbf{Vect}}_{k}$ (vector spaces over a field $k$, and linear maps)}
\item {\textbf{Pos} (partially ordered sets and monotone functions)}
\item {\textbf{Top} (topological spaces and continuous functions)}
\end{itemize}
\item $\Rel$: objects are sets, arrows $R : X \rightarrow Y$ are \emph{relations} $R \subseteq X \times Y$.
Relational composition:
\[ R;S (x, z) \;\; \Longleftrightarrow \;\; \exists y. \, R(x, y) \; \wedge \; S(y, z) \]
\item Let $k$ be a field (for example, the real or complex numbers). Consider the following category $\mbox{\textbf{Mat}}_{k}$. The objects are natural numbers. A morphism $M : \mathbf{n} \rarr \mathbf{m}$ is an $\mathbf{n} \times \mathbf{m}$ matrix with entries in $k$. Composition is matrix multiplication, and the identity on $\mathbf{n}$ is the $\mathbf{n} \times \mathbf{n}$ diagonal matrix.
\item Monoids are one-object categories. Arrows correspond to the elements of the monoid, composition of arrows to the monoid multiplication, and the identity arrow to the monoid unit.
\item {A category in which for each pair of objects $A$, $B$ there is at most one morphism from $A$ to $B$ is the same thing as a
  \emph{preorder}, \ie a reflexive and transitive relation.} Note that the identity arrows correspond to reflexivity, and composition to transitivity.
\end{itemize}

\subsubsection{Categories as Contexts and as Structures}
Note that our first class of examples illustrate the idea of categories as \emph{mathematical contexts}; settings in which various mathematical theories can be developed. Thus for example, \textbf{Top} is the context for general topology, \textbf{Grp} is the context for group theory, etc.

This issue of ``mathematics in context'' should be emphasized. The idea that any mathematical discussion is relative to the category we happen to be working in is pervasive and fundamental. It allows us simultaneously to be both properly specific and general: specific, in that statements about mathematical structures are not really precise until we have specified which structures we are dealing with, \emph{and} which morphisms we are considering --- \ie which category we are working in. At the same time, the awareness that we are working in some category allows us to extract the proper generality for any definition or theorem, by identifying exactly which properties of the ambient category we are using.

On the other hand, the last two examples illustrate that many important mathematical structures \emph{themselves appear as categories of particular kinds}. The fact that two such different kinds of structures as monoids and posets should appear as extremal versions of categories is also rather striking.

This ability to capture mathematics both ``in the large'' and ``in the small'' is a first indication of the flexibility and power of categories.

\subsubsection{Arrows vs.~Elements}
Notice that the axioms for cateories are formulated purely in terms of the algebraic operations on arrows, without any reference to `elements' of the objects. Indeed, in general  elements are not available in a category.
We will refer to any concept which can be defined purely in terms of composition and identities  as \emph{arrow-theoretic}.
We will now take a first step towards learning to ``think with arrows'' by seeing how we can replace some familiar definitions for functions between sets couched in terms of elements by arrow-theoretic equivalents.

We say that a function $f : X \longrightarrow Y$ is:
\begin{center}
\begin{tabular}{lll}
\emph{injective} & if &
$\forall x,x'\in X.\;f(x) = f(x') \;\; \Longrightarrow \;\; x = x'$\,, \\
\emph{surjective} & if &
$\forall y \in Y. \, \exists x \in X. \, f(x) = y$\,, \\
\\
\emph{monic} & if &
$\forall g,h : Z \rarr X.\;f \circ g = f \circ h \;\; \Longrightarrow \;\; g = h$\,, \\
\emph{epic} & if & $\forall g,h : Y \rarr Z.\;g \circ f = h \circ f \;\; \Longrightarrow \;\; g = h$\,.
\end{tabular}
\end{center}
Note that injectivity and surjectivity are formulated in terms of elements, while epic and monic are arrow-theoretic.
\begin{proposition} Let $f:X\rarr Y$. Then:
\begin{enumerate}
\item $f$ is injective iff $f$ is monic.
\item $f$ is surjective iff $f$ is epic.
\end{enumerate}
\end{proposition}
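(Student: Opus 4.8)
The plan is to prove each biconditional by its two implications, handling the four directions separately. Two of them --- injective $\Rightarrow$ monic and surjective $\Rightarrow$ epic --- are routine verifications, while the converses require a small but crucial idea: manufacturing suitable ``probe'' arrows that recover element-level information from the purely arrow-theoretic hypotheses.

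For part (1), I would first show injective implies monic. Given $g, h : Z \rarr X$ with $f \circ g = f \circ h$, I evaluate at an arbitrary $z \in Z$: injectivity applied to $f(g(z)) = f(h(z))$ yields $g(z) = h(z)$, hence $g = h$. For the converse, the key step is to probe $X$ with arrows out of the one-element set $\Term = \{\ast\}$. Given $x, x' \in X$ with $f(x) = f(x')$, I define $g, h : \Term \rarr X$ by $g(\ast) = x$ and $h(\ast) = x'$; then $f \circ g = f \circ h$, so monicity forces $g = h$ and hence $x = x'$. The essential observation is that elements of $X$ correspond exactly to arrows $\Term \rarr X$, which is what lets us translate the arrow-theoretic hypothesis back into a statement about elements.

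For part (2), surjective implies epic is again direct: given $g, h : Y \rarr Z$ with $g \circ f = h \circ f$, each $y \in Y$ equals $f(x)$ for some $x$ by surjectivity, so $g(y) = g(f(x)) = h(f(x)) = h(y)$. The converse is the step I expect to be the main obstacle, since elements of $Y$ need not be hit by $f$, so we cannot directly probe as before. The trick is to compare $f$ against a fixed constant using the two-element set $\Two = \{0,1\}$: let $h : Y \rarr \Two$ be constantly $1$, and let $g : Y \rarr \Two$ be the characteristic function of the image, so $g(y) = 1$ if $y = f(x)$ for some $x$ and $g(y) = 0$ otherwise. By construction $g \circ f = h \circ f$, so epicity gives $g = h$; but $g = h$ says precisely that every $y$ lies in the image, \ie $f$ is surjective.

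I would close by noting that both converse arguments share a common pattern --- detecting the failure of an element-level property by exhibiting two arrows into a small ``classifying'' object that agree after composition with $f$ but differ globally --- and that this is a first illustration of how element-based notions can be given arrow-theoretic characterizations.
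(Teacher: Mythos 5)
Your proof of part (1) is essentially identical to the paper's: the forward direction by evaluating at elements, and the converse by probing with arrows out of the one-element set $\Term$, using the correspondence between elements of $X$ and arrows $\Term \rarr X$. The paper leaves part (2) as an exercise for the reader, and your argument for it is correct and is the standard one --- the two-element set $\Two$ with the characteristic function of the image is exactly the right ``classifying'' probe for the epic-implies-surjective direction.
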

\proof We show 1. Suppose $f : X \rarr Y$ is injective, and that $f \circ g = f \circ h$, where $g, h : Z \rarr X$. Then for all $z \in Z$:
\[ f(g(z)) = f \circ g(z) = f \circ h(z) = f(h(z))\,. \]
Since $f$ is injective, this implies $g(z) = h(z)$. Hence we have shown that
\[ \forall z \in Z. \; g(z) = h(z)\,, \]
and so we can conclude that $g = h$. So $f$ injective implies $f$ monic.
\\
For the converse, fix a one-element set $\Term = \{ \bullet \}$. Note that elements $x \in X$ are in 1--1 correspondence with functions $\bar{x} :
\Term \rarr X$, where $\bar{x}(\bullet) := x$. Moreover, if $f(x) = y$ then $\bar{y} = f \circ \bar{x}$\,. Writing injectivity in these terms, it
amounts to the following:
\[ \forall x, x' \in X. \; f \circ \bar{x} = f \circ \bar{x}' \;\; \Longrightarrow \;\; \bar{x} = \bar{x}' . \]
Thus we see that being injective is a \emph{special case} of being monic. \qed

The reader will enjoy --- and learn from --- proving the equivalence for functions of the conditions of being surjective and epic.

\subsubsection{Generality of Notions}
Since the concepts of monic and epic are defined in purely arrow-theoretic terms, \emph{they make sense in any category}. This possibility for making definitions in vast generality by formulating them in purely arrow-theoretic terms can be applied to virtually all the fundamental notions and constructions which pervade mathematics.

As an utterly elementary, indeed ``trivial'' example, consider the notion of isomorphism. What is an isomorphism \emph{in general}? On might try a definition at the level of generality of model theory, or Bourbaki-style structures, but this is really both unnecessarily elaborate, and still insufficiently general. Category theory has exactly the language needed to give a perfectly general answer to the question, in \emph{any mathematical context, as specified by a category}. An isomorphism in a category $\CC$ is an arrow $f : A \rarr B$ with a two-sided inverse: an arrow $g : B \rarr A$ such that
\[ g \circ f = \id_{A}, \qquad f \circ g = \id_{B} . \]
One can check that in $\Set$ this yields the notion of bijection; in $\Grp$ it yields isomorphism of groups; in $\Top$ it yields homeomorphism; in $\Mat_{k}$, it yields the usual notion of invertible matrix; and so on throughout the range of mathematical structures.
In a monoid considered as a category, an isomorphism is an invertible element. Thus a group is exactly a one-object category in which every arrow is an isomorphism! This cries out for generalization; and the notion of a category in which every arrow is an isomorphism is indeed significant --- it is the idea of a \emph{groupoid}, which plays a key r\^ole in modern geometry and topology.

We also see here a first indication of the \emph{prescriptive nature} of categorical concepts. Having defined a category, what the notion of isomorphism means inside that category is now \emph{fixed} by the general definition. We can observe and characterize what that notion is; if it isn't right for our purposes, we need to work in a different category.

\subsubsection{Replacing Coding by Intrinsic Properties}

We now consider one of the most common constructions in mathematics: the formation of ``direct products''. Once again, rather than giving a case-by-case construction of direct products in each mathematical context we encounter, we can express once and for all a general notion of product, meaningful in any category --- and such that, if a product exists, it is characterised uniquely up to unique isomorphism. Given a particular mathematical context, \ie a category, we can then verify whether on not the product exists in that category. The concrete construction appropriate to the context will enter only into the proof of \emph{existence}; all of the useful \emph{properties} of the product follow from the general definition. Moreover, the categorical notion of product has a \emph{normative} force; we can test whether a concrete construction works as intended by verifying that it satisfies the general definition.

In set theory, the cartesian product is defined in terms of the ordered pair:
\[ X \times Y := \{ (x, y) \mid x \in X \; \wedge \; y \in Y \} . \]
It turns out that ordered pairs can be \emph{defined} in set theory, e.g.~as
\[ ( x, y ) := \{ \{ x, y \}, y\} . \]
Note that in no sense is such a definition canonical. The essential \emph{properties} of ordered pairs are:
\begin{enumerate}
\item We can retrieve the first and second components $x$, $y$ of the ordered pair $(x, y )$, allowing \emph{projection functions} to be defined:
\[ \pi_{1} : (x, y) \mapsto x, \qquad \pi_{2} : (x, y) \mapsto y \, . \]
\item The information about first and second components completely determines the ordered pair:
\[ (x_{1}, x_{2} ) =  ( y_{1}, y_{2} ) \;\; \Longleftrightarrow \;\; x_{1} = y_{1} \; \wedge \; x_{2} = y_{2} . \]
\end{enumerate}

\noindent The categorical definition expresses these properties in arrow-theoretic terms, meaningful in any category.

Let $A,B$ be objects in a category $\CC$. A \emph{product} of $A$ and $B$ is an object $A\times B$ together with a pair of arrows
$A\lTo^{\pi_1} A\times B\rTo^{\pi_2}B$ such that for \emph{every} such triple $A\lTo^{f}C\rTo^{g}B$ there exists a
\emph{unique} morphism
\[ \langle f, g \rangle : C \longrightarrow A \times B \]
such that the following diagram commutes.
\[ \begin{diagram}[4em]
A & \lTo^{\pi_{1}} & A \times B & \rTo^{\pi_{2}} & B \\
& \luTo_{f} & \uDashto^{\langle f, g \rangle}& \ruTo_{g} & \\
& & C & &
\end{diagram}
\]
Writing the equations corresponding to this commuting diagram explicitly:
\[ \pi_{1} \circ \langle f, g \rangle = f, \qquad \pi_{2} \circ \langle f, g \rangle = g . \]
Moreover,  $\langle f, g \rangle$ is \emph{the unique morphism} $h : C \rarr A \times B$ satisfying these equations.


To relate this definition to our earlier discussion of definitons of pairing for sets, note that a `pairing' $A\lTo^{f}C\rTo^{g}B$ offers a decomposition of $C$ into components in $A$ and $B$, at the level of arrows rather than elements. The fact that pairs are uniquely determined by their components is expressed in arrow-theoretic terms by the \emph{universal property} of the product; the fact that for \emph{every} candidate pairing, there is a unique arrow  into the product, which commutes with  taking components.

As immediate evidence that this definition works in the right way, we note the following properties of the categorical product (which of course hold in \emph{any} category):
\begin{itemize}
\item The product is determined \emph{uniquely up to unique isomorphism}. That is, if there are two pairings satisfying the universal property, there is a unique isomorphism between them which commutes with taking components. This sweeps away all issues of coding and concrete representation, and shows that we have isolated the essential content of the notion of product. We shall prove this property for the related case of terminal objects in the next subsection.
\item We can also express the universal property in purely equational terms. This equational specification of products requires that we have a pairing $A\lTo^{\pi_1} A\times B\rTo^{\pi_2}B$ satisfying the equation
\[ \pi_{1} \circ \langle f, g \rangle = f, \qquad \pi_{2} \circ \langle f, g \rangle = g  \]
as before, and additionally, for any $h : C \rarr A \times B$:
\[ h = \langle \pi_1 \circ h, \pi_2 \circ h \rangle . \]
This says that any map into the product is uniquely determined by its components. This equational specification is equivalent to the definition given previously.
\end{itemize}

We look at how this definition works in some of our  example categories.
\begin{itemize}
\item In \textbf{Set}, products are the usual cartesian products.
\item In \textbf{Pos}, products are cartesian products with the pointwise order.
\item In \textbf{Top}, products are cartesian products with the product topology.
\item In $\mbox{\textbf{Vect}}_k$,  products are direct sums.
\item In a poset, seen as a category, products are \emph{greatest lower bounds}.
\end{itemize}

\subsubsection{Terminal Objects}
Our discussion in the previous sub-section was for \emph{binary} products. The same idea can be extended to define the product of any family of objects in a category. In particular, the apparently trivial idea of the product of an empty family of objects turns out to be important. The product of an empty family of objects in a category $\CC$ will be an object $\Term$; there are no projections, since there is nothing in the family to project to! The universal property turns into the following: for each object $A$ in $\CC$, there is a unique arrow from $C$ to $\Term$. Note that compatibility with the projections trivially holds, since there are no projections! This `empty product' is the notion of \emph{terminal object}, which again makes sense in any category.

\noindent \textbf{Examples}
\begin{itemize}
\item In \textbf{Set}, any one-element set $\{\bullet\}$ is terminal.
\item In \textbf{Pos}, the poset $(\{\bullet\}, \{ (\bullet,\bullet)\})$ is terminal.
\item In \textbf{Top}, the space $(\{\bullet\},\{\varnothing,\{\bullet\}\})$ is terminal.
\item In $\mbox{\textbf{Vect}}_k$, the one-element space $\{0\}$ is terminal.
\item In a poset, seen as a category, a terminal object is a greatest element.
\end{itemize}

We shall now prove that terminal objects are \emph{unique up to (unique) isomorphism}. This property is characteristic of all such ``universal'' definitions.
For example, the apparent arbitrariness in the fact that any singleton set is a terminal object in $\Set$ is answered by the fact that what counts is the property of being terminal; and this suffices to ensure that any two  objects having this property must be isomorphic to each other.

The proof of the proposition, while elementary, is a first example of distinctively categorical reasoning.
\begin{proposition}
\label{inuniqueprop}
  If $T$ and $T'$ are terminal objects in the category $\CC$ then there exists a unique isomorphism $T\cong T'$.
\end{proposition}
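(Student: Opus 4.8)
The plan is to exploit the defining universal property of a terminal object directly: for each object $A$ there is exactly one arrow $A \rarr T$, and likewise exactly one arrow $A \rarr T'$. First I would produce the candidate isomorphism and its inverse for free. Since $T'$ is terminal, there is a unique arrow $f : T \rarr T'$; since $T$ is terminal, there is a unique arrow $g : T' \rarr T$. No concrete construction is needed here --- these arrows are simply handed to us by the universal property, which is precisely the point of an arrow-theoretic definition.

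The key step is to show that $g$ and $f$ are mutually inverse, and the distinctively categorical move is to observe that $g \circ f$ and $\id_T$ are \emph{both} arrows $T \rarr T$. Because $T$ is terminal, there is only one arrow $T \rarr T$, so these two must coincide: $g \circ f = \id_T$. Symmetrically, $f \circ g$ and $\id_{T'}$ are both arrows $T' \rarr T'$, and terminality of $T'$ forces $f \circ g = \id_{T'}$. Thus $f$ is an isomorphism with inverse $g$, which establishes the existence of an isomorphism $T \cong T'$.

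Uniqueness then costs nothing extra. Any isomorphism $T \rarr T'$ is in particular an arrow $T \rarr T'$, and terminality of $T'$ says there is exactly one such arrow, namely $f$. Hence the isomorphism is unique; indeed \emph{any} arrow whatsoever from $T$ to $T'$ equals $f$, so a fortiori the isomorphism does.

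I do not expect a serious obstacle. The only point requiring care is resisting the temptation to verify invertibility by manipulating elements, which would both defeat the purpose and fail in a general category. The entire force of the argument is that terminality collapses the hom-set $\CC(T,T)$ to a single arrow, so $g \circ f$ has nowhere to go but $\id_T$; recognising that the identity is competing against $g \circ f$ for the unique slot $T \rarr T$ is the one idea on which the proof turns.
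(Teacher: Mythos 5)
Your proof is correct and follows essentially the same route as the paper's: obtain the two arrows from terminality, then use the fact that the hom-set $\CC(T,T)$ is a singleton to force $g \circ f = \id_T$ (and symmetrically $f \circ g = \id_{T'}$). If anything, you are slightly more explicit than the paper about the uniqueness clause, which the paper leaves implicit in its choice of ``the unique arrow'' as the candidate isomorphism.
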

\begin{proof}
Since $T$ is terminal and $T'$ is an object of $\CC$, there is a unique arrow $\tau_{T'}:T' \rarr T$. We claim that $\tau_{T'}$ is  an isomorphism.
\\
Since $T'$ is terminal and $T$ is an object in $\CC$, there is an arrow $\tau'_T : T \rarr T'$. Thus we obtain $\tau_{T'} \circ \tau'_{T} : T \rarr T$, while we
also have the identity morphism $\id_{T} :T\rarr T$. But $T$ is terminal, and therefore there exists a \emph{unique} arrow from $T$ to $T$, which means that
$\tau_{T'} \circ \tau'_{T} = \id_{T}$. Similarly, $\tau'_{T} \circ \tau_{T'}=\id_{T'}$, so $\tau_{T'}$ is indeed an isomorphism. 
\end{proof}

One can reduce the corresponding property for binary products to this one, since the definition of binary product is equivalently expressed by saying that the pairing $A\lTo^{\pi_1} A\times B\rTo^{\pi_2}B$  is terminal in the category of such pairings, where the morphisms are arrows preserving the components.

It is straightforward to show that if a category has a terminal object, and all binary products, then it has products of all finite families of objects. Thus these are the two cases usually considered.

\subsubsection{Natural Numbers}
We might suppose that category theory, while suitable for formulating general notions and structures, would not work well for specific mathematical objects such as the number systems. In fact, this is not the case, and the idea of \emph{universal definition}, which we just caught a first glimpse of in the categorical notion of product, provides a powerful tool for specifying the basic discrete number systems of mathematics. We shall illustrate this with the most basic number system of all --- the natural numbers (\ie the non-negative integers).

Suppose that $\CC$ is a category with a terminal object $\Term$. We define a \emph{natural numbers object} in $\CC$ to be an object $N$ together with arrows $z : \Term \rarr N$ and $s : N \rarr N$ such that, for every such triple of an object $A$ and arrows $c : \Term \rarr A$, $f : A \rarr A$, \emph{there exists a unique arrow} (note this characteristic property of universal definitions again) $h : N \rarr A$ such that the following diagram commutes:
\[ \begin{diagram}
\Term & \rTo^{z} & N & \rTo^{s} & N \\
& \rdTo_{c} & \dTo<{h} & & \dTo>{h} \\
& & A & \rTo_{f} & A
\end{diagram}
\]
Equivalently, this means that the following equations hold:
\[ h \circ z = c, \qquad h \circ s = f \circ h . \]

Once again, the universal property implies that if a natural numbers object exists in $\CC$, it is unique up to unique isomorphism. We are not committed to any particular representation of natural numbers; we have specified the properties a structure with a constant and a unary operation must have in order to function as the natural numbers in a particular mathematical context.

In $\Set$, we can verify that $\Nat = \{ 0, 1, 2, \ldots \}$ equipped with
\[ z : \{ \bullet \} \rarr \Nat :: \bullet \mapsto 0, \qquad s : \Nat \rarr \Nat :: n \mapsto n+1 \]
does indeed form a natural numbers object. But we are not committed to any particular set-theoretic representation of $\Nat$: whether as von Neumann ordinals, Zermelo numerals \cite{Ben} or anything else. Indeed, any countable set $X$ with a particular element $x$ picked out by a map $z$, and a unary operation $s: X \rarr X$ which is injective and has $X \setminus \{ x \}$ as its image, will fulfil the definition; and any two such systems will be canonically isomorphic.

Note that, if we are given a natural numbers object $(N, z, s)$ in an abstract category $\CC$, the resources of \emph{definition by primitive  recursion} are available to us. Indeed, we can define \emph{numerals relative to $N$}: $\bar{n} : \Term \rarr N  := s^{n} \circ z$. Here $s^n$ is defined inductively: $s^1 = s$, $s^{n+1} = s \circ s^n$.\footnote{There is a metainduction going on here, using a natural number object \emph{outside} the category under discussion. This is not essential, but is a useful device for seeing what is going on.} Given any $(A, c, f)$, with the unique arrow $h : N \rarr A$ given by the universal property, we can check that $h \circ \bar{n} = f^{n} \circ c$. In fact,  if we assume that $\CC$ has finite products, and refine the definition of natural numbers object to allow for parameters, or if we keep the definition of natural numbers object as it is but assume that $\CC$ is \emph{cartesian closed} \cite{LS86}, then all primitive recursive function definitions can be interpreted in $\CC$, and will have their usual equational properties.

\subsubsection{Functors: category theory takes its own medicine}

Part of the ``categorical philosophy'' is:
\begin{center}
\emph{Don't just look at the objects; take the morphisms into account too.}
\end{center}
We can also apply this to categories!
A ``morphism of categories'' is a \emph{functor}.
A functor $F : \CC \rarr \DD$ is given by:
\begin{itemize}
\item An object map, assigning an object $FA$ of $\DD$ to every object $A$ of $\CC$.
\item An arrow map, assigning an arrow $Ff : FA \rarr FB$ of $\DD$ to every arrow $f:A\rarr B$ of $\CC$,
    in such a way that composition and identities are preserved:
\[ F(g \circ f) = Fg \circ Ff\,, \qquad F \id_A = \id_{FA]}. \]
\end{itemize}
Note that we use the same symbol to denote the object and arrow maps; in practice, this never causes confusion. 
%
The conditions expressing preservation of composition and identities are called  \emph{functoriality}.

As a first glimpse as to the importance of functoriality, note the following:
\begin{proposition}
Functors preserve isomorphisms; if $f : A \rarr B$ is an isomorphism, so is $Ff$.
\end{proposition}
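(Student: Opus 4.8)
The plan is to exploit functoriality directly: since $f$ is an isomorphism, it comes equipped with a two-sided inverse, and the obvious candidate for an inverse of $Ff$ is the image under $F$ of that inverse. The entire argument amounts to transporting the two defining equations of an inverse through $F$.

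First I would unpack the hypothesis. By the definition of isomorphism recalled earlier, $f : A \rarr B$ being an isomorphism means there is an arrow $g : B \rarr A$ in $\CC$ with $g \circ f = \id_A$ and $f \circ g = \id_B$. Applying the object map of $F$ yields objects $FA$ and $FB$ of $\DD$, and applying the arrow map yields $Ff : FA \rarr FB$ together with $Fg : FB \rarr FA$. My claim is that $Fg$ is a two-sided inverse for $Ff$.

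Next I would verify the two inverse equations, and this is where functoriality does all the work. Using preservation of composition followed by preservation of identities,
\[ Fg \circ Ff = F(g \circ f) = F(\id_A) = \id_{FA}, \]
and symmetrically $Ff \circ Fg = F(f \circ g) = F(\id_B) = \id_{FB}$. Hence $Ff$ has a two-sided inverse in $\DD$, so $Ff$ is an isomorphism, as required.

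There is really no obstacle here: the only ``insight'' needed is to guess that the inverse of $Ff$ must be $Fg$ rather than anything more elaborate, and once that guess is made the two functoriality axioms close the argument immediately. This is precisely the point the proposition is meant to illustrate --- that the arrow-theoretic notion of isomorphism is automatically respected by any structure-preserving map of categories, the proof being a mechanical consequence of the definitions rather than requiring any construction specific to $\CC$ or $\DD$.
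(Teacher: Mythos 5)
Your proof is correct and follows exactly the same route as the paper's: take the two-sided inverse $g$ of $f$, apply $F$, and use the two functoriality equations to show $Fg$ is a two-sided inverse of $Ff$. Nothing is missing.
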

\begin{proof}
Suppose that $f$ is an isomorphism, with inverse $f^{-1}$. Then
\[ F(f^{-1}) \circ F(f) = F(f^{-1} \circ f) = F(\id_A) = \id_{FA} \]
and similarly $F(f) \circ F(f^{-1}) = \id_{FB}$. So $F(f^{-1})$ is a two-sided inverse for $Ff$, which is thus an isomorphism.
\end{proof}
%
\noindent \textbf{Examples}
\begin{itemize}
\item Let $(P,\leq)$, $(Q,\leq)$ be preorders (seen as categories). A functor $F:(P,\leq)\lrarr(Q,\leq)$ is specified by an object-map, say $F:P\rarr Q$, and
an appropriate arrow-map. The arrow-map corresponds to the condition
\[ \forall p_1,p_2\in P.\, p_1\leq p_2 \implies F(p_1)\leq F(p_2)\,, \]
\ie to monotonicity of $F$. Moreover, the functoriality conditions are trivial since in the codomain $(Q,\leq)$ all hom-sets are singletons.\\
Hence, a functor between preorders is just a monotone map.

\item  Let $(M,\cdot,1)$, $(N,\cdot,1)$ be monoids. A functor $F:(M,\cdot,1)\lrarr(N,\cdot,1)$ is specified by a trivial object map (monoids are categories
with a single object) and an arrow-map, say $F:M\rarr N$. The functoriality conditions correspond to
\[ \forall m_1,m_2\in M.\, F(m_1\cdot m_2)=F(m_1)\cdot F(m_2)\,,\qquad F(1)=1\,,\]
\ie to $F$ being a monoid homomorphism.\\
Hence, a functor between monoids is just a monoid homomorphism.
\end{itemize}
Some further examples:
\begin{itemize}
\item The \emph{covariant} powerset functor $\PP:\mathbf{Set} \rarr \mathbf{Set}$:
\[ X \mapsto \PP(X)\,, \qquad (f : X \rarr Y) \mapsto \PP(f) := S \mapsto \{ f(x) \mid x \in S \} . \]
%

\item More sophisticated examples: e.g.~\emph{homology}. The basic idea of algebraic topology is that there are functorial assignments of algebraic objects (e.g.~groups) to topological spaces. The fact that functoriality implies that isomorphisms are preserved shows that these assignments are \emph{topological invariants}.
Variants of this idea (`(co)homology theories') are pervasive throughout modern pure mathematics.
\end{itemize}

\subsubsection{The category of categories}
There is a category \textbf{Cat} whose objects are categories, and whose arrows are functors. Identities in $\Cat$ are given by identity functors:
\[ \Id_{\CC}:\CC\lrarr\CC:=A\mapsto A,\, f\mapsto f. \]
Composition of functors is defined in the evident
fashion. Note that if $F:\CC \rarr \DD$ and $G : \DD \rarr \EE$ then, for $f : A \rarr B$ in $\CC$,
\[ G \circ F (f) := G(F(f)) : G(F(A))  \lrarr G(F(B)) \]
so the types work out.
A category of categories sounds (and is) circular, but in practice is harmless: one usually makes some size restriction on the categories, and then $\Cat$ will be too `big' to be an object of itself.

\subsubsection{Universality and Adjoints}

Universality arises when we are interested in finding \emph{canonical solutions} to problems of construction: that is, we are interested not just in the \emph{existence} of a solution but in its \emph{canonicity}. This canonicity should guarantee uniqueness, in the sense we have become familiar with: a canonical solution should be \emph{unique up to (unique) isomorphism}.

The notion of canonicity has a simple interpretation in the case of posets, as an \emph{extremal solution}: one that is the least or the greatest among all solutions. Such an extremal solution is obviously unique.
For example, consider the problem of finding a lower bound of a pair of elements $A$, $B$ in a poset $P$: a \emph{greatest lower bound} of $A$ and $B$ is an extremal solution to this problem. As we have seen, this is the specialisation to posets of the problem of constructing a product:
\begin{itemize}
\item A product of $A$, $B$ in a poset is an element $C$ such that $C \leq A$ and $C\leq B$, ($C$ is a lower bound);
\item and for any other solution $C'$, \ie $C'$ such that $C'\leq A$ and $C'\leq B$, we have $C'\leq C$.
    ($C$ is a greatest lower bound.)
\end{itemize}
The ideas of universality and adjunctions have an appealingly simple form in the case of posets, which is, moreover,  useful in its own right, and in particular has some striking applications to logic.
We shall develop the ideas in that special case.

Suppose $g : Q \rarr P$ is a monotone map between posets. Given $x \in P$,
a \emph{$g$-approximation of $x$} (from above) is an element $y \in Q$ such that $x \leq g(y)$.\\
A \emph{best $g$-approximation of $x$} is an element $y \in Q$ such that
\[ x \leq g(y)\ \land\ \forall z \in Q.\,(\, x \leq g(z) \implies y \leq z \,)\,. \]
If a best $g$-approximation exists then it is clearly unique.

\paragraph{Discussion} It is worth clarifying the notion of best $g$-approximation. If $y$ is a best $g$-approximation to $x$, then in particular, by monotonicity of $g$, $g(y)$ is the least element of the set of all $g(z)$ where $z \in Q$ and $x \leq g(z)$. However, the property of being a best approximation is much \emph{stronger} than the mere existence of a least element of this set. We are asking for \emph{$y$ itself} to be the least, \emph{in $Q$},  among all elements $z$ such that $x \leq g(z)$. Thus, even if $g$ is \emph{surjective}, so that for every $x$ there is a $y \in Q$ such that $g(y) = x$, there need not exist a \emph{best} $g$-approximation to $x$. This is exactly the issue of having a \emph{canonical choice} of solution.

\textbf{Exercise}
Give an example of a surjective monotone map $g : Q \rarr P$, and an element $x \in P$, such that there is no best $g$-approximation to $x$ in $Q$.

\vsa
\noindent If such a best $g$-approximation $f(x)$ exists for all $x \in P$ then we have a function $f : P \rarr Q$ such that, for all $x \in P$, $z \in Q$:
\begin{equation}\label{adjeq}
x \leq g(z) \;\; \Longleftrightarrow \;\; f(x) \leq z\, .
\end{equation}
We say that $f$ is the \emph{left adjoint} of $g$, and $g$ is the \emph{right adjoint} of $f$.
It is immediate from the definitions that the left adjoint of $g$, if it exists,  is uniquely determined by $g$.

\begin{proposition}
If such a function $f$ exists, then it is  monotone. Moreover,
\[ \id_{P} \leq g\circ f, \qquad f \circ g \leq \id_{Q}, \qquad f \circ g \circ f = f, \qquad g \circ f \circ g = g. \]
\end{proposition}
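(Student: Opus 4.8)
The plan is to treat the adjunction biconditional (\ref{adjeq}) as the single engine driving the whole proposition: I would instantiate it at reflexivity to obtain two ``triangle inequalities'' (the unit and counit), then bootstrap from these to monotonicity, and finally upgrade a pair of opposing inequalities to each of the two equalities using antisymmetry.

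First I would establish the \emph{unit} inequality $\id_P \leq g \circ f$. Fixing $x \in P$ and setting $z := f(x)$ in (\ref{adjeq}), the right-hand side $f(x) \leq f(x)$ holds by reflexivity, so the left-hand side yields $x \leq g(f(x))$; as $x$ was arbitrary, this is exactly $\id_P \leq g \circ f$. Dually, to obtain the \emph{counit} $f \circ g \leq \id_Q$, I would fix $z \in Q$ and set $x := g(z)$ in (\ref{adjeq}); now the left-hand side $g(z) \leq g(z)$ holds by reflexivity, so the right-hand side gives $f(g(z)) \leq z$.

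Next I would prove monotonicity of $f$, using the unit just obtained. Suppose $x \leq x'$ in $P$. Since $x' \leq g(f(x'))$ by the unit, transitivity gives $x \leq g(f(x'))$; applying (\ref{adjeq}) with $z := f(x')$ then yields $f(x) \leq f(x')$, as required. Note that $g$ is monotone by hypothesis, so both maps are now known to be monotone, which the next step needs.

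Finally I would derive the two equalities. For $f \circ g \circ f = f$, the counit applied at $f(x)$ gives $f(g(f(x))) \leq f(x)$, while the unit $x \leq g(f(x))$ together with monotonicity of $f$ gives $f(x) \leq f(g(f(x)))$; antisymmetry closes the gap. The identity $g \circ f \circ g = g$ is symmetric: the unit applied at $g(z)$ gives $g(z) \leq g(f(g(z)))$, while the counit $f(g(z)) \leq z$ together with monotonicity of $g$ gives $g(f(g(z))) \leq g(z)$, and antisymmetry again finishes. I expect no serious obstacle; the only points requiring care are choosing the correct substitution (and hence the correct direction of the biconditional) at each use of (\ref{adjeq}), and recognising that the final \emph{equalities}, unlike the inequalities, genuinely require antisymmetry, so it is essential here that $P$ and $Q$ are honest posets rather than mere preorders.
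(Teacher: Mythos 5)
Your proposal is correct and follows essentially the same route as the paper: instantiate the adjunction biconditional at $z=f(x)$ and $x=g(z)$ to get the unit and counit, derive monotonicity of $f$ from the unit plus one more application of the biconditional, and obtain the two equalities by sandwiching with the unit, counit, and monotonicity. The only difference is cosmetic --- you argue pointwise where the paper phrases the final step as monotonicity of composition with respect to the pointwise order on functions --- and your closing remark that the equalities genuinely need antisymmetry is a correct observation.
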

\begin{proof}  
If we take $z = f(x)$ in equation (\ref{adjeq}), then since $f(x) \leq f(x)$, $x \leq g
\circ f(x)$. Similarly, taking $x = g(z)$ we obtain  $f \circ g(z) \leq z$.
Now, the ordering on functions $h, k : P \lrarr Q$  is the \emph{pointwise order}:
\[ h \leq k \iff \forall x \in P. \, h(x) \leq k(x) . \]
This gives the first two equations.

Now, if $x \leq_{P} x'$ then $x \leq x' \leq g \circ f(x')$, so $f(x')$ is a $g$-approximation of $x$, and hence $f(x) \leq f(x')$. Thus,
$f$ is monotone.

Finally, using the fact that composition is monotone with
respect to the pointwise order on functions, and the first two equations:
\[ g = \id_{P} \circ g \leq g \circ f \circ g \leq g \circ \id_{Q} = g, \]
and hence $g = g \circ f \circ g$. The other equation is proved similarly. 
\end{proof}

\paragraph{Examples}
\begin{itemize}
\item Consider the inclusion map
\[ i : \ZZ \hookrightarrow \RR\,. \]
This has both a left adjoint $f^{L}$ and a right adjoint $f^{R}$, where $f^{L}, f^{R} : \RR \rarr \ZZ$. For all $z \in \ZZ$, $r \in \RR$:
\[ z \leq f^{R}(r)  \;\; \Longleftrightarrow \;\;  i(z) \leq r \,,\qquad f^{L}(r) \leq z  \;\; \Longleftrightarrow \;\;  r \leq i(z)\,. \]
We see from these defining properties that the right adjoint maps a real $r$ to the \emph{greatest integer below it} (the extremal solution to finding
an integer below a given real). This is the standard \emph{floor function}.\\ Similarly, the left adjoint maps a real to the least integer above it,
yielding the \emph{ceiling function}. Thus:
\[ f^{R}(r) = \lfloor r \rfloor\,, \qquad f^{L}(r) = \lceil r \rceil\,. \]

\item Consider a relation $R \subseteq  X \times Y$. $R$ induces a function:
\[ f_{R} : \pow{X} \lrarr \pow{Y}:= S \mapsto \{ y \in Y \mid \exists x \in S.\, xRy \}\,. \]
This has a right adjoint $[R] : \pow{Y} \lrarr \pow{X}$:
\[ S \subseteq [R]T \;\; \Longleftrightarrow \;\; f_{R}(S) \subseteq T\,. \]
The definition of $[R]$ which satisfies this condition is:
\[ [R]T := \{ x \in X \mid \forall y \in Y. \, xRy \; \Rightarrow \; y \in T \}\,. \]
If we consider a set of \emph{worlds} $W$ with an \emph{accessibility relation} $R \subseteq W \times W$ as in Kripke semantics for modal logic, we see
that $[R]$ gives the usual Kripke semantics for the modal operator $\Box$, seen as a propositional operator mapping the set of worlds satisfied by a
formula $\phi$ to the set of worlds satisfied by $\Box \phi$.
\\
On the other hand, if we think of the relation $R$ as the denotation of a (possibly non-deterministic) program, and $T$ as a predicate on
\emph{states}, then $[R]T$ is exactly the \emph{weakest precondition} $\mathbf{wp}(R, T)$ \cite{dijkstra1976discipline}. In \emph{Dynamic Logic} \cite{pratt2008semantical}, the two settings are combined, and
we can write expressions such as $[R]T$ directly, where $T$ will be (the denotation of) some formula, and $R$ the relation corresponding to a program.
\end{itemize}

\subsubsection{Logical notions as adjunctions}

We shall look at some examples of adjunctions arising from logic \cite{Law69}, which also give a first impression of the deep connections which exist between category theory and logic.

We begin with implication. Implication and conjunction --- whether classical or intuitionistic --- are related by the following  bidirectional inference rule:
\[ \infer={\phi \vdash \psi \rarr \theta}{\phi \; \wedge \; \psi \vdash \theta} \, . \]
If we form the preorder of formulas related by entailment as a category, this rule becomes a relationship between arrows which holds in this category. In fact, it can be shown that this \emph{uniquely characterizes} implication, and is a form of universal definition. Note that it gives the essence of what implication is. The way one proves an implication---essentially the \emph{only} way---is to add the antecedent to one's assumptions and then prove the consequent. This is justified by the above rule.

In terms of the boolean algebra of sets, define $X \Rightarrow Y = X^c \cup Y$, where $X^c$ is the set complement. Then we have, for any sets $X$, $Y$, $Z$:
\[ X \cap Y \, \subseteq \, Z \; \;\; \Longleftrightarrow \;\;\; X \, \subseteq \, Y \Rightarrow Z . \]
This says precisely that the function $f_X : Z \mapsto Z \cap X$  is left adjoint to the function $g_X : Y \mapsto X \Rightarrow Y$.

The same algebraic relation holds in any Heyting algebra, and defines intuitionistic implication.

Now we show that this same formal structure of adjoints underpins quantification.
This is the fundamental insight due to Lawvere \cite{Law69}, that \emph{quantifiers are adjoints to substitution}.

Consider a function $f : X \rarr Y$. This induces a function
\[ f^{-1} : \pow{Y} \lrarr \pow{X} :: T \mapsto \{ x \in X \mid f(x) \in T \} . \]
This function $f^{-1}$ has both a left adjoint $\exists (f) : \pow{X} \lrarr \pow{Y}$, and a right adjoint $\forall (f) : \pow{X} \lrarr \pow{Y}$. These adjoints are uniquely specified by the following conditions. For all $S \subseteq X$, $T \subseteq Y$:
\[ \exists(f)(S) \subseteq T \IFF S \subseteq f^{-1}(T), \;\; \quad  f^{-1}(T) \subseteq S \IFF  T \subseteq \forall(f)(S) . \]
The unique functions satisfying these conditions can be defined explicitly as follows:
\[ \begin{array}{lcl}
\exists(f)(S) &:= & \{ y \in Y \mid \exists x \in X. \, f(x) = y \; \wedge \; x \in S \}\,,  \\
\forall(f)(S) &:= & \{ y \in Y \mid \forall x \in X. \, f(x) = y \; \Rightarrow \; x \in S \}\,.
\end{array}
\]
\noindent Given a formula $\phi$ with free variables in $\{ v_{1}, \ldots , v_{n+1} \}$, it will receive its Tarskian denotation $\lsem \phi \rsem$ in $\Pow(A^{n+1})$ as the set of satisfying assignments:
\[ \lsem \phi \rsem = \{ s \in A^{n+1} \mid s \modX \phi \} \, . \]
We have a projection function
\[ \pi : A^{n+1} \lrarr A^{n} \; :: (a_{1}, \ldots , a_{n+1}) \mapsto (a_{1}, \ldots , a_{n}) \, . \]
Note that this projection is the Tarskian denotation of the tuple of terms $(v_{1}, \ldots , v_{n})$.
We can characterize the standard quantifiers as \emph{adjoints to this projection}:
\[ \lsem \forall v_{n+1}. \, \phi \rsem = \forall(\pi)(\lsem \phi \rsem), \qquad  \lsem \exists v_{n+1}. \, \phi \rsem = \exists(\pi)(\lsem \phi \rsem) \, . \]
More explicitly, the Tarski semantics over a structure $\mathcal{M} = (A, \ldots)$ assigns such formulas values in $\pow{A^{n+1}}$. We can regard the quantifiers $\exists v_{n+1}$, $\forall v_{n+1}$ as functions
\[ \somepi, \allpi : \pow{A^{n+1}} \lrarr \pow{A^{n}} \]
\[ \begin{array}{lcl}
\somepi(S) & = & \{ s \in A^{n} \mid \exists a \in A. \, s[v_{n+1} \mapsto a] \in S \} \\
\allpi(S) & = & \{ s \in A^{n} \mid \forall a \in A. \, s[v_{n+1} \mapsto a] \in S \}
\end{array}
\]
If we unpack the adjunction conditions for the universal quantifier, they yield the following bidirectional inference rule:
\[ \infer=[\quad X = \{ v_{1}, \ldots , v_{n} \} \, . ]{\Gamma \vdash_{X} \forall v_{n+1}. \, \phi}{\Gamma \vdash_{X} \phi} \]
Here the set $X$ keeps track of the free variables  in the assumptions $\Gamma$. Note that
the usual ``eigenvariable condition'' is automatically taken care of in this way.

Since adjoints are uniquely determined, this characterization completely captures the meaning of the quantifiers.

\subsection{Discussion: the significance of category theory}
We turn from this all too brief glimpse at the basics of category theory to discuss its conceptual significance, and why it might matter to philosophy.

The basic feature of category theory which makes it conceptually fascinating and worthy of philosophical study is that it is not just another mathematical theory, but a way of mathematical thinking, and of doing mathematics, which is genuinely distinctive, and in particular very different to the prevailing set-theoretic style which preceded it. If one wanted a clear-cut example of a paradigm-shift in the Kuhnian sense within mathematics, involving a new way of looking at the mathematical universe, then the shift from the set-theoretic to the categorical perspective provides the most dramatic example we possess.

This has been widely misunderstood. Category theory has been portrayed, sometimes by its proponents, but more often by its detractors, as offering an alternative foundational scheme for mathematics to set theory. But this is to miss the point. What category theory offers is an alternative to \emph{foundational schemes in the traditional sense themselves}. This point has been argued with great clarity and cogency in a forceful and compelling essay by Steve Awodey \cite{awodey}. We shall not attempt to replicate his arguments, but will just make some basic observations.

Firstly, it must be emphasized that the formalization of mathematics within the language of set theory, as developed in the first half of the twentieth century, has been extremely successful, and has enabled the formulation of mathematical definitions and arguments with a previously unparalleled degree of precision and rigour. However, the set-theoretical paradigm has some deficiencies.

The set-theoretical formalization of mathematics rests on the idea of representing mathematical objects as sets which can be defined within a formal set theory, typically ZFC. It is indeed a significant empirical observation, as remarked by Blass \cite{blass}, that mathematical objects can be thus represented, and mathematical proofs carried out using the axioms of set theory. This leads to claims such as the recent one by Kunen \cite{Kunen} (p. 14), that
\begin{quotation}
All abstract mathematical concepts are set-theoretic. 
All concrete mathematical objects are specific sets. 
\end{quotation}
This claim fails to ring true, for several reasons.
\begin{itemize}
\item Firstly, the set-theoretic representation is \emph{too concrete}. it involves irrelevant details and choices --- it is a coding rather than a structural representation of the concepts at hand. We saw this illustrated with the issue of defining ordered pairs in set theory, and the contrast with the categorical definition of product, which extracted the essential structural features of pairing at the right level of abstraction. Even if we think of number systems, the representation say of the natural numbers as the finite ordinals in set theory is just a particular coding --- there are many others. The essential features of the natural numbers are, rather, conveyed by the universal definition of \emph{natural numbers object} --- which makes sense in any category. We should not ask what natural numbers \emph{are}, but rather what they \emph{do} --- or what we can do with them. Set theoretic representations of mathematical objects give us too much information --- and information of the wrong kind. 
\item Furthermore, by being too specific, set theoretic representations lose much of  the generality that mathematical concepts, as used by mathematicians, naturally have. Indeed, the notion of natural numbers object makes sense in any category with a terminal object. Moreover, as a universal consruction, if it exists in a given category, it is unique up to unique isomorphism. Once we are in a particular mathematical context specified by a category, we can \emph{look and see} what the natural numbers object is --- while knowing that the standard reasoning principles such as proof by induction and definition by primitive recursion will hold.
\item When one passes to more inherently structural notions, such as `cohomology theory' or `coalgebra' the assertion that `all abstract mathematical concepts are set-theoretic' becomes staggeringly implausible, unless we replace `are' by `are codable into'. The crudity of the pure set-theoretic language becomes all too apparent. One might indeed say that insensitivity to the distortions of coding is a tell-tale feature of the set-theoretic cast of thought.

It may be useful to draw an analogy here with geometry. A major theme of 20th century geometry was the replacement of coordinate-based definitions of geometrical notions (such as tensors or varieties) with `intrinsic' definitions. Coordinates are still very useful for calculations, but the intrinsic definitions are more fundamental and more illuminating --- and ultimately more powerful. The move from set-theoretical encodings, which identify mathematical structures with specific entities in the set-theoretical universe, to universal characterizations which make sense in any mathematical context (category) satisfying some given background conditions, similarly leads to greater insight and technical power.
\end{itemize}

The foundationalist critique of category theory proceeds as follows: 
\begin{enumerate}
\item Category theory cannot emancipate itself completely from set theory, and indeed relies on set theory at certain points.
\item Hence it is not truly fundamental, and cannot serve as a foundation for mathematics. 
\end{enumerate}

On the first point, one can discern two main arguments.
\begin{itemize}
\item Firstly the very definition of category and functor presuppose the notion of a \emph{collection} of things, and of \emph{operations} on these things. So one needs an underlying theory of collections and operations as a substrate for category theory.

This is true enough; but the required `theory of collections and operations' is quite rudimentary. Certainly nothing like formal set theory is presupposed. In fact, the basic notions of categories are  \emph{essentially algebraic} in form \cite{freyd}; that is, they can be formalized as partial algebras, in which the domains of definition of the operations can themselves be defined equationally, in terms of operations which have already been specified. For example, if we consider composition as a partial binary operation $\mathsf{comp}$ on arrows, then $\mathsf{comp}(g, f)$ is defined just when $\cod(f) = \dom(g)$.

\item The second argument is that at various points, issues of \emph{size} enter into category theory. We saw an example of this in considering the category $\Cat$ of categories and functors. Is $\Cat$ an object of $\Cat$? To avoid such issues, one usually defines a version of $\Cat$ with some size restriction; for example, one only considers categories whose underlying collection of arrows form a set in Zermelo-Fraenkel set theory. Then $\Cat$ will be too large (a proper class) to be an object of itself.

There are various technical elaborations of this point. One can consider categories of arbitrary size in a stratified fashion, by assuming a sufficient supply of inaccessible cardinals (and hence of `Grothendieck universes' \cite{blass}). One can also formalize notions of size relative to an ambient category one is `working inside'; which actually describes what one is doing when formalizing category-thoretic notions in set theory.

Again, the point that in practice category theory is not completely emancipated from set theory is fair enough. What should be borne in mind, though, is how innocuous this residue of set theory in category theory actually is in practice. The strongly typed nature of category theory means that one rarely --- one is tempted to say `never' --- stumbles over these size issues; they serve more as a form of type-checking than as a substantial topic in their own right. Moreover, category theoretic arguments typically work generically in relation to size; thus in practice, one argument fits all cases, despite the stratification. 
\end{itemize}

All this is to say that, while category theory is not completely disentangled from set theory, it is quite misleading to see this as the main issue in considering the philosophical significance of categories. The temptation to do so comes from the foundationalist attitude expressed in (2) above. 

The form of categorical structuralism sketched by Awodey in \cite{awodey} stands in contrast to this set-theoretic foundationalism. It is a much better representation of mathematical practice, and it directs attention towards the kind of issues we have been discussing, and away from the well-worn  tracks of traditional thought in the philosophy of mathematics, which after more than a century have surely reached, and passed, the point of diminishing returns.

\subsubsection{Categories and Logic}

Our brief introduction to category theory did not reach the rich and deep connections which exist between category theory and logic. Categorical logic is a well-developed area, with several different branches. The most prominent of these is \emph{topos theory}.

Topos theory is an enormous field in its own right, now magisterially presented in Peter Johnstone's \textit{magnum opus} \cite{elephant}. Because, among other things, it provides a categorical formulation of a form of set theory, it is often seen as the main or even the only part of category theory relevant to philosophy.
Topos theory is seen as an alternative or rival to standard versions of set theory, and the relevance of category theory to the foundations of mathematics is judged in these terms.

There are many things within topos theory of great conceptual interest; but topos theory is far from covering all of categorical logic, let alone all of category theory. From our perspective, there is a great deal of `logic' in the elementary parts of category theory which we have diacussed. The overemphasis on topos theory in this context arises from the wish to understand the novel perspectives of category theory in terms of the traditional concepts of logic and set theory. This impulse is understandable, but misguided. As we have already argued, learning to look at mathematics from a category-theoretic viewpoint is a real and deep-seated paradigm shift. It is only by embracing it that we will reap the full benefits.

Thus while we heartily recommend learning about topos theory, this should build on having already absorbed the lessons to be learnt from category theory in general, and with the awareness that there are other important connections between category theory and logic, in particular \emph{categorical proof theory} and \emph{type theory}.

\subsubsection{Applications of Category Theory}

As we have argued, category theory has a great deal of intrinsic conceptual interest. Beyond this, it offers great potential for applications in formal philosophy, as a powerful and versatile tool for building theories. The best evidence for this comes from Theoretical Computer Science, which has seen an extensive development of applications of category theory over the past four decades.

Some of the main areas where category theory has been applied in Computer Science  include:
\begin{itemize}
\item \textbf{Semantics of Computation}.
Denotational semantics of programming languages makes extensive use of categories. In particular, categories of domains have been widely studied \cite{Scott70,paper30}. An important topic has been the study of \emph{recursive domain equations} such as
\[ D \cong [D \rarr D] \]
which is a space isomorphic to its own function space. Such spaces do not arise in ordinary mathematics, but are just what is needed to provide models for the type-free $\lambda$-calculus \cite{Bar}, in which one has self-application, leading to expressions such as the $\YY$ combinator
\[ \lambda f. (\lambda x.  f(x x)) (\lambda x.  f(x x)) \]
which produces fixpoints from arbitrary terms: $\YY M = M(\YY M)$.

The solution of such domain equations is expressed in terms of fixpoints of functors:
\[ FX \cong X. \]
This approach to the consistent interpretation of a large class of recursive data types has proved very powerful and expressive, in allowing a wide range of reflexive and recursive behaviours to be modelled.

Another form of categorical structure which has proved very useful in articulating the semantic structure of programs are monads. Various `notions of computation' can be encapsulated as monads \cite{DBLP:journals/iandc/Moggi91}. This has proved a fruitful idea, not only in semantics, but also in the development of functional programming languages.

\item \textbf{Type Theories}.
An important point of contact between category theory and logic is in the realm of proof theory and type theory. Logical systems can be represented as categories in which formulas are objects, proofs are arrows, and equality of arrows reflects equality of proofs \cite{LS86}. There are deep connections between cut-elimination in proof systems, and coherence theorems in category theory. Moreover, this paradigm extends to type theories of various kinds, which have played an important r\^ole in computer science as core calculi for programming languages, and as the basis for automated proof systems.

\item \textbf{Coalgebra}.
Over the past couple of decades, a very lively research area has developed in the field of \emph{coalgebra}. In particular, `universal coalgebra' has been quite extensively developed as a very attractive theory of systems \cite{DBLP:journals/tcs/Rutten00}. This entire area is a good witness to the possibilities afforded by categorical thinking. The idea of an algebra as a set equipped with some operations is familiar, and readily generalizes to the usual setting for universal algebra. Category theory allows us to \emph{dualize} the usual discussion of algebras to obtain a very general notion of \emph{coalgebras of an endofunctor}. Coalgebras open up a new and quite unexpected territory, and provides an effective abstraction and mathematical theory for a central class of computational phenomena:
\begin{itemize}
\item Programming over \emph{infinite data structures}, such as streams, lazy lists, infinite trees, etc.

\item A novel notion of \emph{coinduction}

\item Modelling \emph{state-based computations} of all kinds

\item A general notion of \emph{observation equivalence} between processes.

\item A general form of \emph{coalgebraic logic}, which can be seen as a wide-ranging generalization of modal logic.
\end{itemize}
In fact, coalgebra provides the basis for a very expressive and flexible theory of discrete, state-based dynamical systems, which seem ripe for much wider application than has been considered thus far; for a recent application to the representation of physical systems, see \cite{chucoalg}.

\item \textbf{Monoidal Categories}.

Monoidal categories impart a geometrical flavour to category theory. They have a beautiful description in terms of `string diagrams'  \cite{selinger10}, which allows equational proofs to be carried out in a visually compelling way. There are precise correspondences between free monoidal categories of various kinds, and constructions of braids, tangles, links, and other basic structures in knot theory and low-dimensional topology. Monoidal categories are also the appropriate general setting for the discussion of multilinear algebra, and, as has recently been shown, for much of the basic apparatus of quantum mechanics and quantum information: tensor products, traces, kets, bras and scalars, map-state duality, Bell states, teleportation and more \cite{AC04,AC09}. There are  also deep links to linear logic and other substructural, `resource-sensitive'  logics, and to diagrammatic representations of proofs. For a paper showing links between all these topics, see \cite{TL}.
Monoidal categories are used in the  modelling of concurrent processes \cite{bigraphs}, and are beginning to be employed in `computational systems biology' \cite{stochbigraphs}.

Altogether, the development of structures based on monoidal categories, and their use in modelling a wide range of computational, physical, and even biological phenomena, is one of the liveliest areas in current logically and semantically oriented Theoretical Computer Science.

It is interesting to compare and contrast the two rich realms of monoidal categories and the structures built upon them, on the one hand; and topos theory, on the other. One might say: the \emph{linear} world, and the \emph{cartesian} world. It is still not clear how these two worlds should be related. A clearer understanding of the mathematical and structural issues here may shed light on difficult questions such as the relation of quantum and classical in physics.
\end{itemize}

Having surveyed some of the ways in which category theory has been used within Computer Science, we shall now consider some of the features and qualities of category theory which have made it particularly suitable for these applications, and which may suggest a wider range of possible applications within the scope of formal philosophy.

\paragraph{Modelling at the right level of abstraction}
As we have discussed, category theory goes beyond coding to extract the essential features of concepts in terms of universal characterizations, which are then uniquely specified up to isomorphism.
This is not just aesthetically pleasing; as experience in Computer Science has shown, working at the right level of abstraction is \emph{essential} if large and complex systems are to be described and reasoned about in a managable fashion. Formal philosophy will benefit enormously by learning this lesson --- among others! --- from Computer Science.

\paragraph{Compositionality}
Another deep lesson to be learned from Computer Science is the importance of \emph{compositionality}, in the general sense of a form of description of complex systems in terms of their parts. This notion originates in logic, but has been greatly widened in scope and applicability in its use in computer science. 

The traditional approach to systems modelling in the sciences has been \emph{monolithic}; one considers a whole system, models it with a system of differential equations or some other formalism, and then analyzes the model.

In the compositional approach, one  starts with a fixed set of basic, simple building blocks, and \emph{constructions} for building new, more complex systems out of given sub-systems, and builds up the required complex system with these. This typically leads to some form of algebraic description of complex systems:
\[ S \; = \; \omega (S_1 , \ldots , S_n ) \]
where $\omega$ is an operation corresponding to one of the system-building constructions.

In order to understand the logical properties of such a system, one can develop a matching compositional view:
\[ \frac{S_1 \models \phi_1 , \ldots , S_n \models \phi_n}{%
 \omega (S_1 , \ldots , S_n ) \models \phi} \]
 One searches for a rule will allows one to reduce the verification of a property of a complex system to verifications of sub-properties for its components.
 
The compositional methods for description and analysis of systems which have been developed in Computer Science are ripe for application in a much wider range of scientific contexts --- and in formal philosophy.

\paragraph{Mappings between representations}

Another familiar theme in Computer Science is the need for multiple levels of abstraction in describing and analyzing complex systems, and for mappings between them. Functorial methods provide the most general and powerful basis for such mappings. Particular cases, such as Galois connections, which specialize the categorical notion of  adjoint functors to posets, are widely used in abstract interpretation \cite{DBLP:conf/popl/CousotC77}.

\paragraph{Normative criteria for definitions}
As we have already remarked on a couple of occasions, category theory has a strong normative force. If we devise a mapping from one kind of structure to another, category theory tells us that we should demand that it maps morphisms as well as objects, and that it should be \emph{functorial}. Similarly, if we devise some kind of product for a certain type of structures, category theory tells us which properties our construction should  satisfy to indeed be a product in the corresponding category. More generally, constructions, if they are `canonical', should satisfy a suitable universal property; and if they do, then they are unique up to isomorphism. There are other important criteria too, such as \emph{naturality} (which we have not discussed).

These demands and criteria to be satisfied should be seen as providing valuable \emph{guidance}, as we seek to develop a suitable theory to capture some phenomenon. If we have no such guidance, it is all too likely that we may make various ad hoc definitions, not really knowing what we are doing. As it is, once we have specified a category, there are an enormous range of well-posed questions about its structure which we can ask. Does the category have products? Other kinds of limits and colimits? Is it cartesian closed? Is it a topos? And so on.
By the time we have answered these questions, we will already know a great deal about the structure of the category, and what we can do with it. We can also then focus on the more distinctive features of the category, which may in turn lead to a characterization of it, or perhaps to a classification of categories of that kind. 

\subsection{Logic And Category Theory As  Tools For Building Theories}

The project of scientific or formal philosophy, which seems to be gathering new energy in recent times, can surely benefit from the methods and tools offered by Category theory. Indeed, it can surely not afford to neglect them.
Logic has been used as the work-horse of formal philosophy for many years, but the limitations of logic as  traditionally conceived become apparent as soon as one takes a wider view of the intellectual landscape. In particular, Computer Science has led the way in finding new ways of applying logic --- and new forms of logic and structural mathematics which can be fruitfully applied. 

Philosophers and foundational thinkers who are willing and able to grasp these opportunities will find a rich realm of possibilities opening up before them. Perhaps this brief essay, modest in scope as it is,  will point someone along this road. If so, the author will feel handsomely rewarded.

\section{Guide to Further Reading}

The lecture notes \cite{AT}  are a natural follow-up to this article.

The short book~\cite{Pie91} is nicely written and gently paced.
A very clear,  thorough, and essentially self-contained introduction to basic category theory is given in \cite{awodey2010category}.

Another very nicely written text, focussing on the connections between categories and logic, and especially topos theory, is~\cite{Gol}, recently reissued by Dover Books. 
The book~\cite{LS2} is pitched at an elementary level, but offers insights by one of the key contributors to category theory.

The text~\cite{Mac} is a classic by one of the founders of category theory. It assumes considerable background knowledge of mathematics to fully appreciate its wide-ranging examples, but it provides invaluable coverage of the key topics.
The 3-volume handbook~\cite{Bor} provides coverage of a broad range of topics in category theory.

A classic text on categorical logic and type theory is \cite{LS86}. A more advanced text on topos theory is~\cite{MacM}; while \cite{elephant} is a comprehensive treatise, of which Volume~3 is still to appear.

\bibliographystyle{plain}

\bibliography{bibfile}

\end{document}